  \renewcommand\appendix{\par
  \setcounter{section}{0}
  \setcounter{sub
  section}{0}
  \setcounter{figure}{0}
  \setcounter{table}{0}
  \renewcommand\thesection{ Appendix \Alph{section}}
  \renewcommand\thefigure{\Alph{section}\arabic{figure}}
  \renewcommand\thetable{\Alph{section}\arabic{table}}
}
\tikzstyle{mybox} = [draw=black, fill=white,  thick,
\tikzstyle{mybox} = [draw=black, fill=white,  thick,
\newtheorem{corollary}{Corollary}[section]
\newtheorem{lemma}{Lemma}[section]
\newtheorem{theorem}{Theorem}[section]
\newtheorem{remark}{Remark}[section]
\begin{document}

\title{On some topological and combinatorial lower bounds on the chromatic number of Kneser type hypergraphs}
\author{ Soheil Azarpendar\\Amir Jafari}

\maketitle

\begin{abstract}

In this paper, we prove a generalization of a conjecture of Erd\"{o}s, about the chromatic number of certain Kneser-type hypergraphs. For integers $n,k,r,s$ with $n\ge rk$  and $2\le s\le r$, the $r$-uniform general Kneser hypergraph $\mbox{KG}^r_s(n,k)$, has all $k$-subsets of $\{1,\dots,n\}$ as the vertex set and all multi-sets $\{A_1,\dots, A_r\}$ of $k$-subsets with $s$-wise empty intersections as the edge set. The case $r=s=2$, was considers by Kneser \cite{K} in 1955, where he conjectured that its chromatic number is $n-2(k-1)$. This was finally proved by Lov\'asz \cite{L} in 1978. The case $r>2$ and $s=2$, was considered by Erd\"os in 1973,  and he conjectured that its chromatic number is $\left\lceil\frac{n-r(k-1)}{r-1}\right\rceil$. This conjecture was proved by Alon, Frankl and Lov\'asz  \cite{AFL} in 1986. The case where $s>2$, was considered by Sarkaria \cite{S} in 1990, where he claimed to prove a lower bound for its chromatic number which generalized all previous results. Unfortunately, an error was found by Lange and Ziegler \cite{Z'} in 2006 in the induction method of Sarkaria on the number of prime factors of $r$, and Sarkaria's proof only worked when $s$ is less than the smallest prime factor of $r$ or $s=2$. Finally in 2019, Aslam, Chen, Coldren, Frick and Setiabrata \cite{F}
were able to extend this by using methods different from Sarkaria to the case when $r=2^{\alpha_0}p_1^{\alpha_1}\dots p_t^{\alpha_t}$ and $2\le s\le 2^{\alpha_0}(p_1-1)^{\alpha_1}\dots (p_t-1)^{\alpha_t}$. In this paper, by applying the $\mathbb Z_p$-Tucker lemma of Ziegler \cite{Z} and Meunier \cite{M}, we finally prove the general Erd\"{o}s conjecture and prove the claimed result of Sarkaria for any $2\le s\le r$. We also provide another proof of a special case of this result, using methods similar to those of Alon, Frankl, and Lov\'asz \cite{AFL}  and compute the connectivity of certain simplicial complexes that might be of interest in their own right.

 \end{abstract}

\section{Introduction}

For integers $n,k,r,s$ with $n\ge rk$ and $2\le s\le r$, the $r$-uniform hypergraph $\mbox{KG}^r_s(n,k)$, with all $k$-subsets of $[n]:=\{1,\dots,n\}$ as the vertex set and all multi-sets  $\{A_1,\dots, A_r\}$ of $k$-subsets with $s$-wise empty intersection as the edge set, was considered by Sarkaria in \cite{S}. It generalizes the corresponding graphs and hypergraphs of Kneser \cite{K} and Erd\"os \cite{E}. A further generalization was made by Ziegler in \cite{Z}.  For a sequence $S=(s_1,\dots, s_n)$ of integers $0\le s_i\le r$, the $r$-uniform hypergraph $\mbox{KG}^r_S(n,k)$ was defined with all $k$-subsets of $[n]$ as the vertex set and all multi-sets $\{A_1,\dots,A_r\}$ of $k$-subsets that are $S$-disjoint as the edge set.  Here, $S$-disjoint means that for any $1\le i\le n$ the number $1\le j\le r$ with $i\in A_j$ is at most $s_i$. When $s_1=\dots=s_n=s-1$, we recover the Sarkaria's hypergraph $\mbox{KG}^r_s(n,k)$. Let $\bar{n}=s_1+\dots+s_n$. Ziegler claimed that if $\bar{n}\ge kr$ then the chromatic number of this hypergraph $\chi(\mbox{KG}^r_S(n,k))$ \footnote{Ziegler's assumption $s_i<r$ is dropped. If at least $k$ values of $s_i$'s are equal to $r$, then a loop edge $\{A,\dots, A\}$ appears. In this case, we define the chromatic number to be infinity.} is at least $\left\lceil \frac{\bar{n}-r(k-1)}{r-1}\right\rceil$. Unfortunately, the same type of error that existed in Sarkaria \cite{S}. also exited in this proof in the case when $r$ is non-prime, see \cite{Z'} and \cite{LZ}.

For a partition ${\cal P}=\{P_1,\dots, P_l\}$ of $[n]$, Alishahi and Hajiabolhassan \cite{AH} defined an $r$-uniform hypergraph $\mbox{KG}^r(n,k,\cal P)$ with the vertex set of all $k$-subsets $A$ of $[n]$ such that $|A\cap P_j|\le 1$ for all $j=1,\dots, l$ and the edge set of all  $r$-subsets $\{A_1,\dots, A_r\}$  of vertices with pairwise empty intersection.  They studied the chromatic number of this hypergraph when $r=2$. Aslam, Chen, Coldren, Frick and Setiabrata in \cite{F} studied the chromatic number of this hypergraph for $r>2$ under the assumption that $|P_i|\le r$ for all $1\le i\le l$.

The relation between the two hypergraphs $\mbox{KG}_S^r(n,k)$ and $\mbox{KG}^r(n,k,\cal P)$ is as follows. Let $\bar{n}=s_1+\dots+s_n$ and partition $[\bar{n}]$ into ${\cal P}=\{P_1,\dots,P_n\}$ where $P_1$ is the first $s_1$ elements $\{1,\dots, s_1\}$, $P_2$ is the second $s_2$ elements and so on. 
The map $f:[\bar{n}]\rightarrow [n]$ that for $1\le i\le n$, sends the elements of $P_i$ to $i$, defines a homomorphism of $\mbox{KG}(\bar{n},k,\cal P)$ to $\mbox{KG}_S^r(n,k)$ and hence $$\chi(\mbox{KG}_S^r(n,k))\ge \chi(\mbox{KG}^r(\bar{n},k,\cal P)).$$ Our main result is the proof of the following theorem (conjecture 3.5. in \cite{F}).

\begin{theorem}\label{main}

For integers $n,k,r$ with $n\ge rk$ and $r\ge 2$ and a partition ${\cal P}=\{P_1,\dots, P_l\}$ of $[n]$, if $|P_i|\le r$ for all $1\le i\le l$ then the chromatic number of $\mbox{KG}^r(n,k,\cal P)$ is equal to $\left\lceil \frac{n-r(k-1)}{r-1}\right\rceil$.

\end{theorem}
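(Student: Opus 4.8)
\emph{Upper bound.} First I would observe that $\mathrm{KG}^r(n,k,\mathcal P)$ is exactly the induced subhypergraph of the ordinary Erd\"os hypergraph $\mathrm{KG}^r(n,k)$ obtained by discarding the non-rainbow vertices: on the set of rainbow $k$-subsets the two edge sets coincide, since in both cases an edge is an $r$-subset of pairwise disjoint $k$-sets. Consequently every proper coloring of $\mathrm{KG}^r(n,k)$ restricts to a proper coloring of $\mathrm{KG}^r(n,k,\mathcal P)$, so $\chi(\mathrm{KG}^r(n,k,\mathcal P))\le\chi(\mathrm{KG}^r(n,k))=\left\lceil\frac{n-r(k-1)}{r-1}\right\rceil$ by the theorem of Alon, Frankl and Lov\'asz \cite{AFL}; alternatively one writes down the explicit coloring of Erd\"os that assigns to a $k$-set a truncated function of its smallest element. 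This direction needs only $n\ge rk$ and not the hypothesis $|P_i|\le r$.

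\emph{Lower bound.} The whole difficulty is the reverse inequality, and here I would use the $\mathbb Z_p$-Tucker lemma of Ziegler \cite{Z} and Meunier \cite{M}. Suppose, for contradiction, a proper coloring of $\mathrm{KG}^r(n,k,\mathcal P)$ with $C<\left\lceil\frac{n-r(k-1)}{r-1}\right\rceil$ colors. Fixing a prime $p$ dividing $r$, I would manufacture from this coloring a $\mathbb Z_p$-equivariant map $\lambda=(\lambda_1,\lambda_2)\colon(\mathbb Z_p)^N_*\setminus\{\mathbf 0\}\to\mathbb Z_p\times[m]$, reading each nonzero sign vector as an ordered $p$-tuple $(X_1,\dots,X_p)$ of pairwise disjoint subsets of the ground set (a zero coordinate meaning ``unused'', a coordinate equal to $\omega^t$ meaning ``placed in class $t$''). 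The ``small'' labels $\lambda_2\le\alpha:=r(k-1)$ would record partial data that pins down a canonical value of $\lambda_1$, while the $C$ ``large'' labels $\alpha+1,\dots,\alpha+C=m$ would record the color of a monochromatic object read off from the sign vector. The label budget is to be arranged so that the conclusion $\alpha+(p-1)(m-\alpha)\ge N$ of the lemma collapses to exactly $C\ge\left\lceil\frac{n-r(k-1)}{r-1}\right\rceil$, the desired contradiction.

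\emph{The main obstacle.} The crux---and precisely the point at which the arguments of Sarkaria \cite{S} and of Ziegler \cite{Z} fail for composite $r$---is the verification of the second hypothesis of the Tucker lemma: whenever one large label is repeated along a $p$-chain carrying all $p$ distinct signs, the objects it records must assemble into a genuine edge of $\mathrm{KG}^r(n,k,\mathcal P)$. When $r=p$ is prime this is immediate, since $p$ pairwise disjoint (distinct signs force disjointness) rainbow $k$-sets of one color already form an edge. For composite $r$ the $p$ classes furnish only $p<r$ disjoint sets, and the classical remedy---an induction on the number of prime factors of $r$---is exactly the flawed step. I would avoid the induction altogether: using that every block satisfies $|P_i|\le r$ and that vertices are rainbow, I would design $\lambda$ to extract from each $\mathbb Z_p$-class not a single $k$-set but a monochromatic packing of $r/p$ pairwise disjoint rainbow $k$-sets, so that the $p$ classes together produce $p\cdot(r/p)=r$ pairwise disjoint monochromatic rainbow $k$-sets in one stroke. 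The cost of these packings scales the effective domain and threshold $\alpha$, so that the factor $p-1$ in the Tucker conclusion is promoted to the required $r-1$. Carrying out this extraction $\mathbb Z_p$-equivariantly, while keeping the label count tight enough for the inequality to yield the exact bound, is the real technical hurdle; it is where the bounded block sizes are indispensable, and it is what lets a single application of the $\mathbb Z_p$-Tucker lemma replace the broken induction.

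\emph{A second route.} Finally, for the special case flagged in the abstract I would reprove the bound in the style of Alon, Frankl and Lov\'asz \cite{AFL}: translate a hypothetical coloring into an equivariant map on the join/configuration complex of small rainbow packings, compute the connectivity of that complex directly, and conclude by a $\mathbb Z_p$ Borsuk--Ulam theorem. The connectivity estimate is of independent interest and reproduces the same value by a more topological argument.
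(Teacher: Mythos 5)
Your upper bound is correct, and your outline for the prime case (small labels up to a threshold $\alpha$ recording partial combinatorial data, large labels recording colors, one application of the $\mathbb{Z}_p$-Tucker lemma) is in spirit the same as the paper's argument, which follows Meunier \cite{M}. Note, however, that even there you have left out the one genuinely delicate point: how to choose $\lambda_1(A)$ $\mathbb{Z}_p$-equivariantly \emph{and} consistently along chains when the first block $P_j$ met by the admissible part of $A$ may contain up to $p$ marked elements. The paper resolves this by setting $\lambda_1(A)=(i_1\cdots i_h)^{h'}$ where $hh'\equiv 1 \pmod p$, and $\lambda_1(A)=i_1$ when $h=p$; this is exactly where the hypothesis $|P_j|\le p$ is used, and no analogue of it appears in your sketch.

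The genuine gap is your treatment of composite $r$, which is the heart of the theorem. You propose a single $\mathbb{Z}_p$-Tucker application ($p$ a prime divisor of $r$) in which each sign class yields a monochromatic packing of $r/p$ pairwise disjoint rainbow $k$-sets, and you yourself flag the extraction as ``the real technical hurdle''; but no mechanism for it is given, and the natural one fails. To force a single class $B^i$ to contain $r/p$ pairwise disjoint $k$-sets of the \emph{same} color, the only available tool is pigeonhole over the $C$ colors, which needs $|B^i|>kC(r/p-1)$, i.e.\ a threshold $\alpha$ of order $kC(r-p)$ depending on the unknown $C$; feeding this into the Tucker conclusion $\alpha+(p-1)C\ge n$ yields roughly $C\gtrsim n/(k(r-p)+p-1)$, far weaker than $\left\lceil\frac{n-r(k-1)}{r-1}\right\rceil$. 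Conversely, with the color-independent budget you actually state ($\alpha=r(k-1)$, $N=n$, $m-\alpha=C$), success would give $C\ge\frac{n-r(k-1)}{p-1}$, which for $p<r$ \emph{exceeds} the upper bound you proved in your first paragraph --- so that version of the labeling provably cannot exist. This is precisely the wall that Sarkaria's and Ziegler's arguments ran into. The paper avoids it by an entirely different and much simpler device: by Lemma 3.2 of \cite{F}, validity of Theorem \ref{main} for $r=r_1$ and $r=r_2$ implies its validity for $r=r_1r_2$ (this multiplicativity is exactly why the statement is formulated for the partition hypergraphs $\mbox{KG}^r(n,k,{\cal P})$ rather than for $\mbox{KG}^r_s(n,k)$ directly), so the whole theorem reduces to the prime case and the $\mathbb{Z}_p$-Tucker lemma is only ever invoked with $r=p$. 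You should replace your composite-$r$ plan with this reduction; as it stands, the step that distinguishes the theorem from previously known cases is missing. (Your final paragraph on the connectivity route corresponds to the paper's last section, but that section only handles the prime case of Corollary \ref{z}, and it too is only sketched in your proposal.)
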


This theorem was proved in the following special cases in \cite{F}. When $r=2^{\alpha_0}$, when $r$ is an odd prime number and $|P_i|\le r-1$. More generally when $r=2^{\alpha_0}p_1^{\alpha_1}\dots p_t^{\alpha_t}$ and $|P_i|\le 2^{\alpha_0}(p_1-1)^{\alpha_1}\dots (p_t-1)^{\alpha_t}$. The method we use to prove this theorem is different and simpler than the method used in \cite{F}. The proof uses the $\mathbb Z_p$-Tucker lemma of Ziegler and Meunier,  see \cite{Z} and \cite{M}.

In particular, a corollary to our main theorem above is the following claimed result of Ziegler (without the extra assumption of $s_i<r$), whose proof had an error.

\begin{corollary}\label{z} If $n,k,r$ and $S=(s_1,\dots, s_n)$ are as above and $\bar{n}=s_1+\dots+s_n\ge kr$ then 
$$\chi(\mbox{KG}^r_S(n,k))\ge \left\lceil \frac{\bar{n}-r(k-1)}{r-1}\right\rceil.$$
\end{corollary}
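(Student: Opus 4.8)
The plan is to derive the corollary directly from Theorem \ref{main} by transporting the equality across the homomorphism already exhibited in the introduction. Given the sequence $S=(s_1,\dots,s_n)$, I would first build the partition $\mathcal{P}=\{P_1,\dots,P_n\}$ of $[\bar n]$ by setting $P_1=\{1,\dots,s_1\}$, $P_2=\{s_1+1,\dots,s_1+s_2\}$, and so on, so that $|P_i|=s_i$ for each $i$; any empty parts (arising when some $s_i=0$) are simply discarded, which changes neither $\mbox{KG}^r(\bar n,k,\mathcal{P})$ nor the value $\bar n=\sum_i s_i$. Since the standing definition imposes $0\le s_i\le r$, every part satisfies $|P_i|=s_i\le r$, so the size hypothesis of Theorem \ref{main} is met, and together with the assumption $\bar n\ge kr$ all hypotheses of Theorem \ref{main} hold for $\mbox{KG}^r(\bar n,k,\mathcal{P})$.

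Next I would invoke Theorem \ref{main} to obtain
$$\chi(\mbox{KG}^r(\bar n,k,\mathcal{P}))=\left\lceil\frac{\bar n-r(k-1)}{r-1}\right\rceil.$$
The map $f:[\bar n]\to[n]$ sending each element of $P_i$ to $i$ is then a hypergraph homomorphism $\mbox{KG}^r(\bar n,k,\mathcal{P})\to\mbox{KG}^r_S(n,k)$: a vertex $A$ of the source meets each $P_i$ in at most one element, so $f$ is injective on $A$ and $f(A)$ is a genuine $k$-subset of $[n]$; and for an edge $\{A_1,\dots,A_r\}$ with pairwise empty intersections, each index $i$ can lie in $f(A_j)$ for at most $|P_i|=s_i$ values of $j$ (the relevant $A_j$ use distinct elements of $P_i$ by disjointness), so $\{f(A_1),\dots,f(A_r)\}$ is $S$-disjoint and hence an edge. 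Because a homomorphism cannot increase the chromatic number, the pullback of any proper coloring is proper, and therefore
$$\chi(\mbox{KG}^r_S(n,k))\ge\chi(\mbox{KG}^r(\bar n,k,\mathcal{P}))=\left\lceil\frac{\bar n-r(k-1)}{r-1}\right\rceil,$$
which is precisely the claimed bound.

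The argument is short because all the combinatorial-topological difficulty is absorbed into Theorem \ref{main}; the only points requiring care are degenerate cases, and I expect these to be the main (modest) obstacle. When at least $k$ of the $s_i$ equal $r$, the hypergraph $\mbox{KG}^r_S(n,k)$ contains a loop $\{A,\dots,A\}$ and its chromatic number is infinite by convention, so the inequality holds trivially; this case I would note separately rather than route through Theorem \ref{main}, observing that the homomorphism still exists and simply maps into a target of infinite chromatic number. I would also record explicitly that the pullback of a proper coloring remains proper even when $f$ collapses distinct source vertices to a common target vertex, since an image edge failing to be monochromatic forces the source edge to be non-monochromatic as well.
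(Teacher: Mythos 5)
Your proposal is correct and takes essentially the same route as the paper: the paper obtains Corollary \ref{z} precisely by applying Theorem \ref{main} to the partition of $[\bar n]$ into consecutive blocks of sizes $s_1,\dots,s_n$ and invoking the homomorphism $f:[\bar n]\to[n]$ described in the introduction, which gives $\chi(\mbox{KG}^r_S(n,k))\ge \chi(\mbox{KG}^r(\bar n,k,\mathcal{P}))$. Your additional care with degenerate cases (discarding empty parts, and the loop case when at least $k$ of the $s_i$ equal $r$, where the chromatic number is infinite by the paper's convention) is sound and only makes explicit what the paper leaves implicit.
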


The proof of this corollary when $r=p$ is a prime number and $s_i<r$, in Ziegler \cite{Z}, was correct and used the $\mathbb Z_p$-Tucker lemma. In the last section of this article, we provide a different proof for this case, where we compute the connectivity of a certain complex whose method might be useful in other places. In this section, some other complexes are defined that might be of interest as well. It is worthwhile to note that here, we allow $s_i=r$, which was excluded by Ziegler.

Erd\"os \cite{E} introduced the following method to derive an upper bound for $\chi(\mbox{KG}_S^r(n,k))$. Assume that $s_1\le s_2\le \dots\le s_n<r$ and $s_1+\dots+s_n\ge kr$. Let $t_1$ be the largest number such that $s_1+\dots+s_{t_1}<rk$. Now let $t_2$ be the largest number such that $s_{(t_1+1)}+\dots +s_{(t_1+t_2)}<r$, and continue this way to find the largest $t_i$ so that 
$$s_{(t_1+\dots+t_{i-1}+1)}+\dots+s_{(t_1+\dots+t_{i-1}+t_i)}<r$$
 If after $l$ steps, the process stops, i.e. $t_1+\dots+t_l=n$, then $\chi(\mbox{KG}_S^r(n,k))\le l$. Here we give a proper coloring as follows. We will color $A$ with color $1$ if $A\subset \{1,\dots, t_1\}$, and with color $i$ (for $i=2,\dots, l$) if it contains at least one of the elements $t_1+\dots+t_{i-1}+1,\dots, t_1+\dots+t_{i-1}+t_i$. To see that this coloring is proper, assume that for an $S$-disjoint multiset $\{A_1,\dots, A_r\}$ of $k$-subsets, all the sets have color $1$, then since on one hand $\sum_{i=1}^r |A_i|=rk$ and on the other hand each element $1\le i\le t_1$ is repeated at most $s_i$ times, this sum is at most $\sum_{i=1}^{t_1}s_i<kr$ which is a contradiction. If all these subsets have color $i\ge 2$, then the number of sets that contain $j$ between $t_1+\dots+t_{i-1}+1$ and $t_1+\dots+t_i$ is at most $s_j$, since each $A_1,\dots, A_r$ must contain at least one of these $j$'s hence the sum of these numbers should be at least $r$, but it is assumed that $\sum s_j$ for these $j$'s is less than $r$, which is a contradiction again.

  If $s_1=\dots=s_n=s<r$ and $sn\ge rk$, this method yields a coloring of $\mbox{KG}^r_S(n,k)$ with $1+\left\lceil{\frac{ns-rk+1}{P s}}\right\rceil$ colors where $P=\lfloor \frac{r-1}{s} \rfloor$. This is because $t_2=\dots=t_{l-1}=P$ and $t_1=\lfloor \frac{kr-1}{s}\rfloor$. This is the content of Lemma 3.1 of \cite{Z}.

So, we can complete the proof of the following corollary of Ziegler \cite{Z} and Sarkaria \cite{S}.

\begin{corollary}
If $s_1=\dots=s_n=s$ and $s$ divides $r-1$ and $ns\ge rk$ then
$$\chi(\mbox{KG}^r_S(n,k))=\left\lceil\frac{ns-r(k-1)}{r-1}\right\rceil.$$
\end{corollary}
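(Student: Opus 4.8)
The plan is to sandwich $\chi(\mbox{KG}^r_S(n,k))$ between matching lower and upper bounds, both of which are already on the table. Since $s_1=\dots=s_n=s$, we have $\bar n=ns$, and the hypothesis $ns\ge rk$ is exactly the condition $\bar n\ge kr$ demanded by Corollary \ref{z}. Applying that corollary gives, with no further work, the lower bound
$$\chi(\mbox{KG}^r_S(n,k))\ge \left\lceil\frac{ns-r(k-1)}{r-1}\right\rceil.$$

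For the upper bound I would invoke the explicit Erd\"os coloring recalled just before the statement. Note first that $s\mid r-1$ together with $r\ge 2$ forces $s\le r-1<r$, so the case $s=r$ (which would create loops) never arises and the construction's hypotheses $s<r$ and $sn\ge rk$ are both met. It therefore produces a proper coloring with
$$1+\left\lceil\frac{ns-rk+1}{Ps}\right\rceil\quad\text{colors},\qquad P=\left\lfloor\frac{r-1}{s}\right\rfloor.$$

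The only genuine step is to verify that this upper bound coincides with the lower bound, and this is where the divisibility hypothesis does its job. Since $s$ divides $r-1$, the floor is exact, so $P=(r-1)/s$ and hence $Ps=r-1$. Using the elementary identity $1+\lceil x\rceil=\lceil x+1\rceil$ with $x=\frac{ns-rk+1}{r-1}$, I would compute
$$1+\left\lceil\frac{ns-rk+1}{r-1}\right\rceil=\left\lceil\frac{ns-rk+1+(r-1)}{r-1}\right\rceil=\left\lceil\frac{ns-r(k-1)}{r-1}\right\rceil,$$
which matches the lower bound precisely. Combining the two inequalities yields the claimed equality. I do not expect any real obstacle beyond this ceiling bookkeeping: the substance of the corollary sits entirely in the deep lower bound of Corollary \ref{z} and in Erd\"os's clean upper-bound coloring, and the hypothesis $s\mid r-1$ is exactly what is needed to make the floor exact and force the two bounds to meet.
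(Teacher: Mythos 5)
Your proposal is correct and is essentially the paper's own argument: the paper obtains this corollary by combining the lower bound of Corollary \ref{z} with the Erd\"os coloring, which uses $1+\left\lceil\frac{ns-rk+1}{Ps}\right\rceil$ colors, and the hypothesis $s\mid r-1$ gives $Ps=r-1$ so the two bounds meet. Your explicit ceiling computation $1+\left\lceil\frac{ns-rk+1}{r-1}\right\rceil=\left\lceil\frac{ns-r(k-1)}{r-1}\right\rceil$, which the paper leaves implicit, is also correct.
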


\begin{remark}
There are other examples of $S$, where the lower bound and upper bound for the chromatic number of $\mbox{KG}_S^r(n,k)$ coincide. For instance if $s_1=\dots=s_{kr-1}=1$ and $s_{kr}=\dots=s_n=r-1$, we get $\chi(\mbox{KG}_S^r(n,k))=n-rk+2$.
\end{remark}

\section{Proof of the Theorem \ref{main}}

Because of Lemma 3.2 of \cite{F}, the validity of Theorem \ref{main} for $r=r_1$ and $r=r_2$ implies its validity for $r=r_1r_2$. So it is enough to prove Theorem \ref{main} for the case when $r=p$ is a prime number. For this purpose, we follow the method of Meunier in \cite{M} and use the $\mathbb{Z}_p$-Tucker lemma, whose statement, we now recall from \cite{M}.

The classical Tucker lemma was introduced by Tucker in \cite{T} as a combinatorial counterpart for the topological Borsuk-Ulam theorem that asserts there is no continuous map $f$ from the $n$ dimensional sphere $S^n$ to the $(n-1)$ dimensional sphere $S^{n-1}$ that is antipodal i.e. $f(-x)=-f(x)$. It replaces the sphere $S^n$ with the simplicial complex $\mbox{E}_n(\mathbb Z_2)$, which is $(n+1)$-fold joins of the discrete two pointed simplicial complex $\mathbb Z_2$, whose vertices are $\{\pm 1,\dots, \pm(n+1)\}$ and its faces are all subsets that do not contain $i$ and $-i$ together. If we let $sd(X)$ to denote the barry-centric subdivision of a simplicial complex $X$, then the statement of the Tucker lemma in a special case is
\begin{theorem} (Tucker's lemma) There is no antipodal  simplicial map from $sd(\mbox{E}_n(\mathbb Z_2))$ to $\mbox{E}_{n-1}(\mathbb Z_2)$.
\end{theorem}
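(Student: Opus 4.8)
The plan is to derive this special case of Tucker's lemma from the topological Borsuk-Ulam theorem by passing to geometric realizations. The key observation is that $\mbox{E}_n(\mathbb{Z}_2)$, being the $(n+1)$-fold join of the discrete two-point complex $\mathbb{Z}_2 = S^0$, has geometric realization homeomorphic to the $n$-sphere, $|\mbox{E}_n(\mathbb{Z}_2)| \cong S^n$; concretely it is the boundary of the $(n+1)$-dimensional cross-polytope. Moreover, under this homeomorphism the simplicial involution $i \mapsto -i$ realizes as a free $\mathbb{Z}_2$-action conjugate to the standard antipodal map $x \mapsto -x$. Since barycentric subdivision leaves the underlying space and its action unchanged, $|sd(\mbox{E}_n(\mathbb{Z}_2))| \cong S^n$ carries the same free antipodal involution, and likewise $|\mbox{E}_{n-1}(\mathbb{Z}_2)| \cong S^{n-1}$.

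First I would make these identifications precise, checking that negating all coordinates of the cross-polytope is exactly the combinatorial antipodal action, so that both source and target are spheres equipped with genuine free antipodal $\mathbb{Z}_2$-actions. Next, suppose toward a contradiction that an antipodal simplicial map $f\colon sd(\mbox{E}_n(\mathbb{Z}_2)) \to \mbox{E}_{n-1}(\mathbb{Z}_2)$ exists. Passing to geometric realizations yields a continuous map $|f|\colon S^n \to S^{n-1}$, and the equivariance $f(-\sigma) = -f(\sigma)$ implies $|f|(-x) = -|f|(x)$ for all $x$. Thus $|f|$ is a continuous antipodal map from $S^n$ to $S^{n-1}$, which the Borsuk-Ulam theorem forbids, and this contradiction proves the lemma.

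The main obstacle is the equivariance bookkeeping in the first step: one must verify carefully that the combinatorial antipodal action on the join, its behavior under barycentric subdivision, and the action induced by a simplicial map all correspond to honest free antipodal involutions on the realizations, so that Borsuk-Ulam genuinely applies. An alternative, fully self-contained route avoids Borsuk-Ulam entirely and argues combinatorially: one interprets $f$ as an antipodal labeling of the vertices of $sd(\mbox{E}_n(\mathbb{Z}_2))$ by $\{\pm 1, \dots, \pm n\}$ in which no simplex receives a complementary pair $\{j, -j\}$, and then derives a parity contradiction by counting suitably ``alternating'' simplices via a path-following argument in the spirit of Sperner's lemma. This second proof is more delicate but has the advantage of not presupposing the topological input that Tucker's lemma is designed to replace.
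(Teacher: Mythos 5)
Your proof is correct, but note that the paper does not actually prove this statement: Tucker's lemma is quoted as a classical result (with a citation to Tucker), and in the paper's logical architecture it is later subsumed as the $p=2$ case of Dold's theorem, for which the paper cites Dold's topological proof and Ziegler's combinatorial one. Your reduction to the Borsuk--Ulam theorem is the standard topological argument and every step goes through: $|\mbox{E}_n(\mathbb{Z}_2)|$ is the boundary of the $(n+1)$-dimensional cross-polytope, hence equivariantly homeomorphic to $S^n$ with the antipodal action (the simplicial involution is the restriction of the linear map $x\mapsto -x$, and radial projection to the sphere is equivariant); barycentric subdivision changes neither the underlying space nor the action; and the affine extension of an antipodal simplicial map satisfies $|f|(-x)=-|f|(x)$, so Borsuk--Ulam yields the contradiction. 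In spirit this agrees with the paper, since it is exactly the $p=2$ instance of the topological route the paper invokes for Dold's theorem; what your argument buys is a short, self-contained proof of precisely the stated special case, whereas the paper's framework is built to deliver the general $\mathbb{Z}_p$ statement (which is what the main theorem actually needs), and Ziegler's combinatorial proof additionally avoids importing any topology at all. One caution: since Tucker's lemma is usually advertised as a combinatorial substitute for Borsuk--Ulam, a proof that invokes Borsuk--Ulam must rest on an independent proof of the latter (via degree theory or homology, say) to avoid circularity; you flag this correctly, and your sketched Sperner-style path-following alternative would remove the dependence entirely, but as written that alternative is only an outline rather than a proof.
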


Dold generalized this lemma to the case when the action of $\mathbb Z_2$ is replaced with the action of $\mathbb Z_p$, see \cite{D}. Similarly one lets $\mbox{E}_n(\mathbb Z_p)$ be the $(n+1)$-fold joins of the discrete simplicial complex $\mathbb Z_p$ with its natural free $\mathbb Z_p$ action. Here the vertices are $\mathbb Z_p\times [n+1]$ where $\mathbb Z_p$ acts on the first component by multiplication (we may let $\mathbb Z_p$ be the multiplicative group of the $p$th roots of unity) and its faces are all subsets $F$ such that $\pi_2:F\rightarrow [n]$ is one to one, where $\pi_2$ is the projection on to the second component. The statement of his theorem in a special case is

\begin{theorem} \label{dold}There is no $\mathbb Z_p$-equivariant simplicial map from $sd(\mbox{E}_n(\mathbb Z_p))$ to $\mbox{E}_{n-1}(\mathbb Z_p)$ or more generally to any simplicial complex $X$ with a free $\mathbb Z_p$ action and of dimension less than $n$.
\end{theorem}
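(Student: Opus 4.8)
The plan is to treat Theorem \ref{dold} as the standard special case of Dold's theorem and to prove it by a characteristic-class argument with $\mathbb{F}_p$-coefficients, after first recording the connectivity of $\mbox{E}_n(\mathbb{Z}_p)$. Since $\mathbb{Z}_p$ regarded as $p\ge 2$ discrete points is $(-1)$-connected, iterating the join estimate $\operatorname{conn}(A*B)\ge \operatorname{conn}(A)+\operatorname{conn}(B)+2$ across the $(n+1)$ join factors shows that $\mbox{E}_n(\mathbb{Z}_p)$ is $(n-1)$-connected. Barycentric subdivision leaves the geometric realization unchanged, so $E:=sd(\mbox{E}_n(\mathbb{Z}_p))$ is again $(n-1)$-connected and its realization still carries a free $\mathbb{Z}_p$-action; moreover a $\mathbb{Z}_p$-equivariant simplicial map out of $E$ realizes to a continuous equivariant map, so it suffices to rule out a continuous $\mathbb{Z}_p$-map $|E|\to |X|$ with $X$ free and $\dim X<n$.

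I would then argue by contradiction. For a free $\mathbb{Z}_p$-complex $Z$, the free cover $Z\to Z/\mathbb{Z}_p$ is classified by a map $c_Z:Z/\mathbb{Z}_p\to B\mathbb{Z}_p$ whose homotopy fibre is $Z$, and a $\mathbb{Z}_p$-map $Z\to X$ induces $\bar g:Z/\mathbb{Z}_p\to X/\mathbb{Z}_p$ with $c_Z=c_X\circ \bar g$. Apply this with $Z=E$. On one side, the $(n-1)$-connectedness of $E$ makes $c_E:E/\mathbb{Z}_p\to B\mathbb{Z}_p$ an $n$-equivalence (by the long exact sequence of $E\to E/\mathbb{Z}_p\to B\mathbb{Z}_p$), hence injective on $H^n(\,\cdot\,;\mathbb{F}_p)$; since $H^*(B\mathbb{Z}_p;\mathbb{F}_p)$ is nonzero in every degree (a degree-$1$ exterior and a degree-$2$ polynomial generator for $p$ odd, a single degree-$1$ generator for $p=2$), the generator $u\in H^n(B\mathbb{Z}_p;\mathbb{F}_p)$ satisfies $c_E^*(u)\ne 0$. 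On the other side, $\dim X<n$ forces $\dim(X/\mathbb{Z}_p)<n$, so $H^n(X/\mathbb{Z}_p;\mathbb{F}_p)=0$ and $c_X^*(u)=0$. But naturality gives $c_E^*(u)=\bar g^*c_X^*(u)=0$, contradicting $c_E^*(u)\ne 0$.

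I expect the only genuinely nontrivial step to be the conversion of connectivity into cohomology, i.e.\ the claim that $(n-1)$-connectedness of the free source forces the degree-$n$ characteristic class to survive. Concretely one must see that $\mbox{E}_n(\mathbb{Z}_p)/\mathbb{Z}_p$ behaves like the $n$-skeleton of a model for $B\mathbb{Z}_p$, so that $c_E^*$ is an isomorphism in degrees below $n$ and injective in degree $n$, and verify that the relevant power of the polynomial generator (multiplied by the exterior class when $n$ is odd) is precisely the nonzero class of $H^n(B\mathbb{Z}_p;\mathbb{F}_p)$. The remaining ingredients — the join connectivity bound, invariance under subdivision, freeness of the $\mathbb{Z}_p$-action on the realization, and the vanishing of $H^n(X/\mathbb{Z}_p;\mathbb{F}_p)$ from the dimension hypothesis — are routine.
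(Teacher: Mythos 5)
Your proposal is correct, but there is nothing in the paper to compare it against line by line: the paper does not prove this statement at all, quoting it as Dold's theorem and deferring to the topological proof of Dold \cite{D} and the combinatorial proof of Ziegler \cite{Z}. What you have written is a faithful reconstruction of the standard topological argument, i.e., essentially Dold's own route: the join estimate gives $(n-1)$-connectedness of $\mbox{E}_n(\mathbb{Z}_p)$, subdivision and geometric realization reduce the simplicial statement to a continuous one, and the contradiction comes from comparing the classifying maps $c_E$ and $c_X$ of the free quotients, using that $H^n(B\mathbb{Z}_p;\mathbb{F}_p)\neq 0$ in every degree while $H^n(X/\mathbb{Z}_p;\mathbb{F}_p)=0$ because $\dim (X/\mathbb{Z}_p)=\dim X<n$. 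Two steps deserve more care than your sketch gives them, though both are standard. First, passing from ``$c_E$ is an $n$-equivalence'' to ``$c_E^*$ is injective on $H^n(-;\mathbb{F}_p)$'' requires the relative Hurewicz theorem in its non-simply-connected ($\pi_1$-coinvariants) form, since $E/\mathbb{Z}_p$ and $B\mathbb{Z}_p$ have fundamental group $\mathbb{Z}_p$; after that, duality over the field $\mathbb{F}_p$ gives the injection. Your concrete fallback, identifying $\mbox{E}_n(\mathbb{Z}_p)/\mathbb{Z}_p$ with the $n$-skeleton of a standard (lens-space-type) model of $B\mathbb{Z}_p$, sidesteps this entirely and is perhaps cleaner. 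Second, freeness of the simplicial action means no simplex is invariant under a nontrivial group element, so after subdividing, $X\to X/\mathbb{Z}_p$ is a covering whose base is a complex of dimension $\dim X$; this is what licenses both the fibration sequence and the dimension-forced vanishing. With these supplied, your argument is complete, and it actually yields the stronger continuous version of Dold's theorem, whereas the alternative proof the paper cites (Ziegler's) is purely combinatorial and stays within the simplicial category.
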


Dold gave a topological proof for this theorem, later Ziegler produced a combinatorial proof in \cite{Z}. The following lemma is called the $\mathbb Z_p$-Tucker lemma and is an immediate corollary to the Dold's theorem above. 

\begin{lemma}\label{tucker}{\textbf{ ($\mathbb Z_p$-Tucker Lemma)}} Let $n, m$ and $\alpha$ be integers and $p$ be a prime number, where $\alpha\le m$. If $\lambda$ is a $\mathbb Z_p$-equivariant map from the non-empty faces of $\mbox{E}_{n-1}(\mathbb Z_p)$ to $\mathbb Z_p\times [m]$ with $\lambda(A)=(\lambda_1(A),\lambda_2(A))\in \mathbb Z_p\times [m]$ that satisfies the following properties,
\begin{enumerate}
\item If $A_1\subseteq A_2$ be non-empty faces of $E_{n-1}(\mathbb Z_p)$ and $\lambda_2(A_1)=\lambda_2(A_2)\le \alpha$ then $\lambda_1(A_1)=\lambda_1(A_2)$.
\item If $A_1\subseteq \dots\subseteq A_p$ be non-empty faces of $E_{n-1}(\mathbb Z_p)$ and $\lambda_2(A_1)=\dots=\lambda_2(A_p)>\alpha$ then $\lambda_1(A_1),\dots, \lambda_1(A_p)$ are not pairwise distinct.
\end{enumerate}
then $$\alpha+(m-\alpha)(p-1)\ge n.$$
\end{lemma}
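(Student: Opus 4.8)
The plan is to argue by contradiction through Dold's theorem (Theorem \ref{dold}). Suppose the conclusion fails, so that $N := \alpha + (m-\alpha)(p-1) \le n-1$. The goal is then to manufacture a $\mathbb{Z}_p$-equivariant simplicial map $\mu$ from $sd(\mbox{E}_{n-1}(\mathbb{Z}_p))$ to $\mbox{E}_{N-1}(\mathbb{Z}_p)$. Since $\mbox{E}_{N-1}(\mathbb{Z}_p)$ carries a free $\mathbb{Z}_p$-action and has dimension $N-1 \le n-2 < n-1$, the mere existence of such a $\mu$ contradicts Theorem \ref{dold} (applied with the role of $n$ there played by $n-1$). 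That contradiction forces $N \ge n$, which is exactly the asserted inequality.

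To build $\mu$, recall that the vertices of $sd(\mbox{E}_{n-1}(\mathbb{Z}_p))$ are precisely the nonempty faces $A$ of $\mbox{E}_{n-1}(\mathbb{Z}_p)$, and that a simplicial map out of a subdivision is determined by its values on these vertices, subject to the requirement that every flag $A_1 \subsetneq \cdots \subsetneq A_q$ be sent to a face of the target, i.e. to a set of vertices with pairwise distinct second coordinates (coincidence of second coordinates being tolerated only when the vertices themselves coincide). I would split the construction according to $\lambda_2(A)$: reserve the target coordinates $1,\dots,\alpha$ for the regime $\lambda_2 \le \alpha$, and reserve a block of $p-1$ coordinates for each $j \in \{\alpha+1,\dots,m\}$ to handle $\lambda_2 = j > \alpha$. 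This uses exactly $N$ second coordinates.

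In the low regime $\lambda_2(A)=j\le\alpha$ I set $\mu(A)=(\lambda_1(A),j)$; here the first hypothesis of the lemma does all the work, since along any flag all faces sharing a label $j\le\alpha$ also share their $\lambda_1$-value, so their images coincide and the flag goes to a face. In the high regime $\lambda_2(A)=j>\alpha$ I would place $A$ in the $j$-th block, at a second coordinate $\alpha+(j-\alpha-1)(p-1)+t(A)$, where $t(A)\in\{1,\dots,p-1\}$ is a counting statistic built from the $\lambda_1$-values of the subfaces of $A$ that also carry the label $j$. The second hypothesis is exactly what caps $t(A)$ at $p-1$: along any chain of such subfaces the values of $\lambda_1$ can never be all $p$ elements of $\mathbb{Z}_p$, so at most $p-1$ distinct values accumulate. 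Equivariance (invariance of $t$ under the $\mathbb{Z}_p$-action) is automatic, because multiplying by a group element merely permutes $\mathbb{Z}_p$ and so preserves these counts.

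The delicate point, and the step I expect to be the main obstacle, is choosing the \emph{first} coordinate of $\mu(A)$ in the high regime so that $\mu$ is genuinely simplicial. The tempting choice $\mu_1(A)=\lambda_1(A)$ fails: property 2 forbids $p$ pairwise distinct $\lambda_1$-values along a chain but still permits a value to recur, so along one flag with constant $\lambda_2=j$ two faces can be driven onto the same block coordinate while carrying different $\lambda_1$'s, and their images then fail to span a face. The fix is to let the first coordinate be an equivariant function not of $\lambda_1(A)$ itself but of the entire value-datum that defines $t(A)$, arranged so that any two faces of a flag receiving the same block coordinate necessarily carry identical value-data and hence are sent to one and the same vertex. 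Making this statistic simultaneously well defined, monotone along flags, bounded by $p-1$, and $\mathbb{Z}_p$-equivariant is the technical heart of the matter, and it must be done with some care precisely because the naive ``distinct-value count'' can overshoot $p-1$ on incomparable subfaces while failing monotonicity on chains. Once the labeling is pinned down and the flag-to-face property is verified, equivariance together with the dimension bound feeds directly into Theorem \ref{dold} and closes the argument.
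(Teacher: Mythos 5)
Your overall strategy---assume the inequality fails, use conditions 1 and 2 to build a $\mathbb{Z}_p$-equivariant simplicial map out of $sd(\mbox{E}_{n-1}(\mathbb{Z}_p))$, and contradict Dold's theorem---is the same as the paper's, and your treatment of the regime $\lambda_2\le\alpha$ is correct. But there is a genuine gap exactly where you flag one: the high regime. In $\mbox{E}_{N-1}(\mathbb{Z}_p)$, two nested faces $A\subsetneq A'$ carrying the same label $j>\alpha$ must either map to literally the same vertex or to vertices with distinct second coordinates. Condition 2 allows $\lambda_1$ to revisit, along a chain, a value it took earlier and then abandoned (say values $a,b,a$ with $a\ne b$), and more generally it allows arbitrarily long chains alternating between two values; one can even arrange five label-$j$ faces whose comparability relations form a $5$-cycle in which comparable faces carry different $\lambda_1$-values while every chain sees only two values. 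For such a configuration (take $p=3$), keeping $\mu_1=\lambda_1$ would force $t$ to be a proper $2$-coloring of a $5$-cycle, which does not exist; so the first coordinate must indeed be redefined, equivariantly and compatibly with $t$---and that construction, which you yourself call the technical heart, is never given. What you are implicitly demanding is an equivariant simplicial map from the order complex of the poset of label-$j$ faces into $(\mathbb{Z}_p)^{\star(p-1)}=\mbox{E}_{p-2}(\mathbb{Z}_p)$, knowing only that every chain has $\lambda_1$-image a proper subset of $\mathbb{Z}_p$. Such a map exists abstractly only after further subdivision, which your setup (where $\mu$ is defined on the faces of $\mbox{E}_{n-1}(\mathbb{Z}_p)$ themselves) does not permit. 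So the deferred step is not a routine verification; it is the whole lemma.

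The missing idea is that the target need not be $\mbox{E}_{N-1}(\mathbb{Z}_p)$ at all: Theorem \ref{dold}, as stated, forbids equivariant simplicial maps into \emph{any} free $\mathbb{Z}_p$-complex of dimension less than $n-1$. The paper exploits this by mapping into $\mbox{E}_{\alpha-1}(\mathbb{Z}_p)\star\underbrace{\partial\Delta^{p-1}\star\dots\star\partial\Delta^{p-1}}_{m-\alpha}$, where $\partial\Delta^{p-1}$ is the complex of proper subsets of $\mathbb{Z}_p$. In the high regime one simply sends $A$ to the vertex $\lambda_1(A)$ of the $(\lambda_2(A)-\alpha)$-th copy of $\partial\Delta^{p-1}$: the image of a chain with constant label $j>\alpha$ is then just the set of its $\lambda_1$-values, which condition 2 guarantees is a proper subset of $\mathbb{Z}_p$, hence a face. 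Repeated values are harmless and no auxiliary statistic is needed. This join is free (since $p$ is prime) and has dimension $\alpha+(m-\alpha)(p-1)-1$, so Dold's theorem immediately gives $\alpha+(m-\alpha)(p-1)\ge n$. In short, your insistence on landing in an $\mbox{E}$-space is precisely what creates the obstacle you could not overcome; relaxing the target dissolves it.
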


The proof immediately follows from the Dold's Theorem \ref{dold}, if one notices that the conditions are designed to guarantee the existence of a $\mathbb Z_p$-equivariant simplicial map from $sd(\mbox{E}_{n-1}(\mathbb Z_p))$ to $\mbox{E}_{\alpha-1}(\mathbb Z_p)\star \underbrace{\partial \Delta^{p-1}\star \dots\star \partial \Delta^{p-1}}_{\tiny{m-\alpha}}$, where $\partial\Delta^{p-1}$ is the simplicial complex of all proper subsets of $\mathbb Z_p$ with its free $\mathbb Z_p$ action. Since dimension of the target complex is $\alpha+(m-\alpha)(p-1)-1$, Dold's theorem finishes the proof.
\\
Now, we present our proof of Theorem \ref{main} for the case when $r=p$ is a prime number.
\begin{proof}
Let $t$ be the chromatic number of $\mbox{KG}^p(n,k,{\cal P})$ and $c$ be a proper coloring map from the vertices to $\{1,\dots, t\}$. Also choose an arbitrary complete ordering on the subsets of $[n]$, such that if $|A|<|B|$ then $A<B$. We define a mapping $\lambda$ as in Lemma \ref{tucker} for $m=p(k-1)+t$ and $\alpha=p(k-1)$. Therefore, we conclude $$p(k-1)+t(p-1)\ge n$$ and hence $t\ge \frac{n-p(k-1)}{p-1}$. Since by the standard way of coloring of a Kneser hypergraph $t\le \left\lceil\frac{n-p(k-1)}{p-1}\right\rceil$, the theorem follows.

Assume that ${\cal P}=\{P_1,\dots, P_l\}$ is a partition of $[n]$ with $|P_i|\le p$ for $i=1,\dots, l$. For a non-empty face $A$ of $\mbox{E}_{n-1}(\mathbb Z_p)$
and $i\in \mathbb Z_p$, let 
$$A^i =\{1\le j\le n| (i,j)\in A\}.$$
Choose the maximum subset (with respect to the chosen order)  $B\subseteq A$ such that for all $i\in \mathbb Z_p$, one has $B^i$ is admissible, that is  $|B^i\cap P_j|\le 1$ for all $1\le j\le l$. 
\\
Let $1\le j\le l$ be the smallest number that $\pi_2(B)\cap P_j$ is non-empty.  There is a  unique subset $B'=\{(i_1,j_1),\dots, (i_h,j_h)\}\subseteq B$ with $j_1<\dots<j_h$ such $\pi_2(B')=\pi_2(B)\cap P_j$ . We define 
$$P(A)=\begin{cases}
i_1&\mbox{if}\:\: h=p\\
(i_1\dots i_h)^{h'}&\mbox{if}\: h<p \end{cases}
$$
where $1\le h'<p$ is the unique number that $hh'\equiv 1\mod p$.
With these preliminaries, the definition of the map $\lambda$ is given in two cases.
\\
{\textbf{Case 1:}} If $|B|\le \alpha=p(k-1)$ then define $$\lambda(A)=(P(A),|B|).$$
\\
{\textbf{Case 2:}} If $|B|>p(k-1)$, then there is a  $k$-subset $F\subseteq B^i$ for some $i\in \mathbb Z_p$. Note that by assumption on $B$, one has $|F\cap P_j|\le 1$ for all $1\le j\le l$. Choose the smallest (with respect to the chosen order) such subset, say $F\subseteq B^i$ and define $$\lambda(A)=(i, c(F)+\alpha).$$

It remains to check the conditions of the $\mathbb Z_p$-Tucker lemma. Note that if $B\subseteq A$ is the maximum subset such that $B^i$ is admissible for all $i\in\mathbb Z_p$, then $\omega\cdot B\subseteq \omega\cdot A$ for $\omega\in \mathbb Z_p$, is the maximum subset such that $(\omega\cdot B)^i$ is admiisble for all $i\in {\mathbb Z_p}$. So, we have the same $j$ as in the definition, and $(\omega\cdot B)'=\{(\omega\cdot i_1,j_1),\dots, (\omega\cdot i_h ,j_h)\}$, and hence $P(\omega\cdot A)=\omega\cdot P(A)$. This obvious if $h=p$ and follows from $\omega^{hh'}=\omega$ if $h<p$. This proves that $\lambda$ is a $\mathbb Z_p$-map in the Case 1. In the second case if $F\subseteq B^i$ is the maximum $k$-subset then $F\subseteq (\omega\cdot B)^{\omega\cdot i}$ is the corresponding maximum subset and hence $\lambda(\omega\cdot A)=\omega\cdot \lambda(A)$. 
\\

Now assume $A_1\subseteq A_2$ are non-empty faces of $\mbox{E}_{n-1}(\mathbb Z_p)$ with $\lambda_2(A_1)=\lambda_2(A_2)\le \alpha$. Hence we are in the Case 1. Let $B_1\subseteq A_1$ and $B_2\subseteq A_2$ be maximum subsets with the property that $B_1^i$ and $B_2^i$ are admissible for all $i\in \mathbb Z_p$. Since $\lambda_2(A_1)=\lambda_2(A_2)$ hence $|B_1|=|B_2|$. Since $B_1\subseteq A_2$, also has the property that $B_1^i$ is admissible, hence $|\pi_2(B_1)\cap P_j|=|\pi_2(B_2)\cap P_j|$ for all $1\le j\le l$. It follows that the smallest $1\le j\le l$, needed in the definition is the same for both of them. Hence their corresponding $h$ is also the same. If $h=p$ then since $|P_j|\le p$ then $\pi_2(B_1)\cap P_j=\pi_2(B_2)\cap P_j=P_j$ and hence $B_1'=B_2'$ and it implies that $\lambda_1(A_1)=\lambda_1(A_2)$. If $h<p$, then since $\pi_1(B_1')=\pi_1(B_2')$ then again it follows that $\lambda_1(A_1)=\lambda_1(A_2)$.
\\

Finally, assume that $A_1\subseteq \dots\subseteq A_p$ are non-empty faces of $\mbox{E}_{n-1}(\mathbb Z_p)$ and $\lambda_2(A_1)=\dots=\lambda_2(A_p)>\alpha$. We are then in the Case 2, and we may find admissible $k$-subsets $F_i\subseteq B_i^{\lambda_1(A_i)}$ for $i=1,\dots, p$ with the same color $c(F_1)=\dots=c(F_p)$. If $\lambda_1(A_1),\dots, \lambda_1(A_p)$ are pairwise distinct then since $B_i^{\lambda_1(A_i)}\cap B_j^{\lambda_1(A_j)}=\emptyset$, the subsets $F_1,\dots, F_p$ are pairwise disjoint. This contradicts the properness of the coloring $c$. Hence the conditions are checked and the theorem is proved.
\end{proof}

 \section{Proof of the Corollary \ref{z} when $r$ is a prime number}

For an $r$-uniform hypergraph $G$, Alone, Frankl and Lov\'asz introduced a  simplicial complex $C(G)$ in \cite{AFL} as follows. Its vertex set is all $r$ tuples $(a_1,\dots,a_r)$ of vertices where $\{a_1,\dots, a_r\}$ is an edge. Its faces are all subsets $\{(a_1^i,\dots,a_r^i)\}_{i\in I}$ such that for all choices of not necessarily distinct indices $i_1,\dots, i_r\in I$ the subset $\{a_1^{i_1},\dots, a_r^{i_r}\}$ is an edge. The cyclic group $\mathbb Z_r$ acts on this complex by shifting. It was proved in \cite{AFL} that

\begin{theorem}\label{thm1} If $r$ is a prime number and the complex $C(G)$ is $c$-connected then $$\chi(G)\ge \left\lceil \frac{c+r+1}{r-1}\right\rceil.$$
\end{theorem}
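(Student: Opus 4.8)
The plan is to follow the topological method of Alon, Frankl and Lov\'asz: assuming $G$ is properly coloured with $t=\chi(G)$ colours, I would manufacture a $\mathbb{Z}_r$-equivariant map from $C(G)$ into a free $\mathbb{Z}_r$-complex of controlled dimension and then invoke Dold's Theorem \ref{dold} to bound $t$ from below. First I would record that the shift action of $\mathbb{Z}_r$ on $C(G)$ is free. Since $r$ is prime, a face fixed setwise by a non-trivial shift would be a union of full $\mathbb{Z}_r$-orbits of vertices; taking one such orbit $\{v,\omega v,\dots,\omega^{r-1}v\}$ with $v=(a_1,\dots,a_r)$, it would itself be a face, but the rainbow condition applied to the positions would force the monochromatic multiset $\{a_1,\dots,a_1\}$ to be an edge, impossible for a loopless $r$-uniform hypergraph. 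Hence no face is invariant and the action is free on $|C(G)|$.

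The heart of the argument is the construction of this map. Fix a proper colouring $c\colon V(G)\to\{1,\dots,t\}$. For a point $x=\sum_i\lambda_i v^i$ of $|C(G)|$ carried by a face $\{v^i\}$ with $v^i=(a_1^i,\dots,a_r^i)$, and for each colour $\gamma$ and position $q\in\mathbb{Z}_r$ I would set $y_{\gamma,q}=\sum_{i:\,c(a_q^i)=\gamma}\lambda_i$. The shift acts on $C(G)$ by rotating positions, so it permutes the index $q$ cyclically while fixing $\gamma$; thus for each colour the vector $(y_{\gamma,q})_q$ is a point of the simplex on the vertex set $\mathbb{Z}_r$, and the assignment is $\mathbb{Z}_r$-equivariant. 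Two applications of the hypotheses cut this down to the required target. First, the rainbow condition defining $C(G)$ together with properness guarantees that every colour $\gamma$ actually used omits at least one position: if some tuples of the face realised $\gamma$ in every position $q$, the rainbow edge assembled from those positions would be monochromatic, contradicting properness. Hence each active block lands in $\partial\Delta^{r-1}$, the free complex of proper subsets of $\mathbb{Z}_r$. Second, properness applied to each individual tuple $v^i$ shows that the colour $t$ alone can never carry all the mass, so I may discard the $t$-th block and renormalise over the colours $1,\dots,t-1$, the discarded denominator being bounded away from $0$ by compactness. This yields a $\mathbb{Z}_r$-equivariant map $\Phi\colon C(G)\to(\partial\Delta^{r-1})^{\star(t-1)}$ into the $(t-1)$-fold join, a free $\mathbb{Z}_r$-complex of dimension $(r-1)(t-1)-1$. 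I expect this factor-saving step — the passage from the naive $t$-fold join to a $(t-1)$-fold join, which is precisely what sharpens $\lceil\frac{c+2}{r-1}\rceil$ to the asserted bound — to be the \emph{main obstacle}, and the place where both the rainbow condition and the properness of $c$ are genuinely used.

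Finally I would apply Dold's theorem. Since $C(G)$ is $c$-connected, there is a $\mathbb{Z}_r$-equivariant map $\mbox{E}_{c+1}(\mathbb{Z}_r)\to C(G)$, built skeleton by skeleton on orbit representatives (the obstruction to extending over a cell of dimension at most $c+1$ lies in a homotopy group of $C(G)$ that vanishes by $c$-connectedness). Composing with $\Phi$ produces a $\mathbb{Z}_r$-equivariant map $\mbox{E}_{c+1}(\mathbb{Z}_r)\to(\partial\Delta^{r-1})^{\star(t-1)}$. By Theorem \ref{dold} this is impossible once the dimension of the target drops below $c+1$, so $(r-1)(t-1)-1\ge c+1$. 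Rearranging gives $(r-1)t\ge c+r+1$, whence $\chi(G)=t\ge\left\lceil\frac{c+r+1}{r-1}\right\rceil$, as claimed.
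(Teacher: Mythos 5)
Your proof is correct, and it is essentially the argument of Alon, Frankl and Lov\'asz: the paper itself does not prove Theorem \ref{thm1} but imports it from \cite{AFL}, and your reconstruction — the coloring-induced equivariant map into $(\partial\Delta^{r-1})^{\star(t-1)}$ (with the two uses of properness: each used colour misses a position, and no single colour carries all the mass, which justifies dropping one join factor), composed with an equivariant map $\mbox{E}_{c+1}(\mathbb Z_r)\to C(G)$ obtained from $c$-connectedness, followed by Dold's theorem — is exactly that method, with the dimension count $(r-1)(t-1)-1\ge c+1$ giving the stated bound. The only inessential excess is your opening paragraph: freeness of the $\mathbb Z_r$-action on $C(G)$ is never needed, since Dold's theorem requires freeness only on the target and the source $\mbox{E}_{c+1}(\mathbb Z_r)$ is free by construction.
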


To compute a lower bound for the connectivity of $C(G)$, we work with its maximal nerve. Let us recall its definition. If $C$ is a simplicial complex, its maximal nerve, denoted by $N(C)$ is a simplicial complex whose vertices are maximal faces of $C$ and a subset $\{F_1,\dots,F_m\}$ is a face if and only if $F_1\cap\dots\cap F_m\ne \emptyset$. It is a well-known fact that $C$ and $N(C)$ are homotopy equivalent, see for example \cite{Bo}. Hence $C$ and $N(C)$ have the same connectivity number. The maximal nerve of $C(\mbox{KG}^r_S(n,k))$, and even a slightly more general complex $K_S(n,k_1,\dots, k_r)$, will be explicitly constructed as follows.

Given an integer $r\ge 1$, an integer-valued function $s$ on $[n]$ such that $0\le s(i)\le r$ and an $r$-tuple $\mathbf{k}=(k_1,\dots, k_r)$, where $k_i\ge 0$ define two complexes $K_s(n,\mathbf{k})$ and $C_s(n,\mathbf{k})$ as follow. The vertices of $K_s(n,\mathbf{k})$ are the $r$-tuples $(A_1,\dots, A_r)$ of subsets of $\{1,\dots, n\}$ such that $|A_i|= k_i$ and the $r$-tuple is $s$-disjoint (that is $i$ appears in at most $s(i)$ of $A_1,\dots, A_r$).  The subset $\{(A_1^i,\dots, A_r^i)\}_{i\in I}$ is a face if for all $i_1,\dots, i_r\in I$ the $r$-tuple $(A_1^{i_1},\dots, A_r^{i_r})$ is $s$-disjoint. The vertices of $C_s(n,\mathbf{k})$ are the $r$-tuples $(A_1,\dots, A_r)$ of subsets of $\{1,\dots, n\}$ such that $|A_i|\ge k_i$ and for each $x\in \{1,\dots, n\}$ the number of $1\le i\le r$ that $x\in A_i$ is exactly $s(x)$. The subset $\{(A_1^i,\dots, A_r^i)\}_{i\in I}$ is a face if $|\cap_{i\in I}A_j^i|\ge k_j$ for all $j=1,\dots, r$. If $s(1)+\dots+s(n)< k_1+\dots+k_r$ or $k_i$ is bigger than the number of $x$ with $s(x)>0$, then clearly this complex is empty. The exact condition for which the complex is non-empty is more complicated and we do not need it here. But we need the following result.
\begin{lemma} \label{empty}The complex $C_s(n,k,\dots, k)$, where $k$ is repeated $r$ times is non-empty if and only if $\sum_{i=1}^n s(i)\ge rk$.
\end{lemma}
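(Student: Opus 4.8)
The plan is to prove both directions of the equivalence for the complex $C_s(n,k,\dots,k)$. The forward direction (non-emptiness implies $\sum_{i=1}^n s(i)\ge rk$) is the easy one: if a vertex $(A_1,\dots,A_r)$ exists, then each $|A_i|\ge k$, so $\sum_{i=1}^r|A_i|\ge rk$. On the other hand, by the defining condition of $C_s(n,\mathbf{k})$, each $x\in[n]$ lies in exactly $s(x)$ of the sets $A_1,\dots,A_r$, so $\sum_{i=1}^r|A_i|=\sum_{x=1}^n s(x)$. Combining the two inequalities gives $\sum_{i=1}^n s(i)\ge rk$ immediately, with no case analysis needed.

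The harder direction is the converse: assuming $\sum_{i=1}^n s(i)\ge rk$, I must exhibit at least one $s$-compatible $r$-tuple $(A_1,\dots,A_r)$ with each $|A_i|\ge k$ (equivalently, since the total size is fixed, with each $|A_i|=k$ if $\sum s(i)=rk$, or with sizes distributed appropriately otherwise). The natural construction is greedy. First I would list the $\sum_{x=1}^n s(x)$ ``tokens'', where element $x$ contributes $s(x)$ copies, each of which must be placed into a distinct one of the $r$ sets (distinctness of the copies of $x$ is exactly what $s$-compatibility demands, since $x$ may appear in at most — in fact here exactly — $s(x)$ of the $A_i$). I would then distribute these tokens into the $r$ bins $A_1,\dots,A_r$ so as to balance the bin sizes as evenly as possible, subject to the constraint that the $s(x)$ copies of each $x$ land in $s(x)$ distinct bins. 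Since the total number of tokens is $\sum_x s(x)\ge rk$, a balanced assignment forces every bin to receive at least $\lfloor (\sum_x s(x))/r\rfloor\ge k$ tokens, giving $|A_i|\ge k$ for all $i$.

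The main obstacle is verifying that the ``distinct bins per element'' constraint does not obstruct the balancing. This is a bipartite degree-constrained assignment problem, and the cleanest way to dispatch it is to process the elements one at a time, always placing the $s(x)$ copies of the current element $x$ into the $s(x)$ currently least-loaded bins. An exchange argument (or a direct application of a Gale–Ryser type condition for the existence of a $0$–$1$ matrix with prescribed column sums $s(x)$ and row sums at least $k$) shows that this greedy rule never lets any bin fall more than one token behind the others, so when the total reaches $rk$ no bin can end below $k$. The bookkeeping must confirm that the resulting column sums are exactly the $s(x)$ (guaranteeing $s$-compatibility and the ``exactly $s(x)$'' membership condition in the definition of $C_s$) while the row sums are all at least $k$; once this single vertex is produced, the complex is non-empty and the proof is complete.
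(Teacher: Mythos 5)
Your proof is correct, and it takes a genuinely different route from the paper's. Both agree on necessity (double counting: $\sum_i |A_i| = \sum_x s(x)$ and each $|A_i|\ge k$). For sufficiency, the paper inducts on $r$: after sorting so that $s(1)\ge \dots\ge s(n)$, it takes $A_1=\{1,\dots,k\}$ (the $k$ elements of largest $s$-value), passes to $s'=s-\mathbf{1}_{\{1,\dots,k\}}$, checks $\sum_i s'(i)\ge (r-1)k$ and $s'(i)\le r-1$, and applies the induction hypothesis to produce $(A_2,\dots,A_r)$; it needs a separate case when $M=\{i: s(i)=r\}$ has size at least $k$. You instead build all $r$ sets simultaneously by a balanced greedy: place the $s(x)$ copies of each successive element $x$ into the $s(x)$ currently least-loaded bins, maintaining the invariant that bin loads differ by at most one, so the minimum final load is $\lfloor \sum_x s(x)/r\rfloor \ge k$, and each $x$ lands in exactly $s(x)$ distinct bins. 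One stylistic correction: your appeal to an ``exchange argument'' or Gale--Ryser is heavier machinery than needed; the invariant follows from a direct two-case check (if loads lie in $\{L,L+1\}$ with $b$ bins at level $L$, then after inserting $s(x)\le r$ copies they lie in $\{L,L+1\}$ when $s(x)\le b$ and in $\{L+1,L+2\}$ when $s(x)>b$), and you should state that check rather than invoke a black box. What your route buys: no sorting, no induction on $r$, and no special case for elements with $s(i)=r$ --- a real advantage, since the paper's shortcut for that case (taking $A_1=\dots=A_r=M$ when $|M|\ge k$) as written violates the ``exactly $s(x)$'' membership condition for elements outside $M$ and needs patching. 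What the paper's route buys: its peel-off-one-set reduction mirrors the induction on $r$ used later (Lemma \ref{lem3} and the connectivity theorem), so it fits the paper's overall architecture.
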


\begin{proof}
The necessity is obvious. To prove the sufficiency, let $M=\{1\le i \le n| s(i)=r\}$. We prove our claim by induction on $r$. If $r=1$, then the condition of the lemma implies that $|M|\ge k$ so $A_1=M$ is an element of $C_s(n,k)$. More generally if $|M|\ge k$ then $A_1=\dots=A_r=M$ is an element of $C_s(n,k,\dots,k)$, with $k$ repeated $r$ times. So we assume that $|M|<k$. Also assume without loss of generality that $s(1)\ge \dots\ge s(n)$. The condition implies that $s(1)\ge \dots \ge s(k)>0$. Define $s'(i)=s(i)-1$ for $1\le i\le k$ and $s'(i)=s(i)$ for $k<i\le n$. Then $0\le s'(i)\le r-1$ for all $i$, and $\sum_{i=1}^n s'(i)=\sum_{i=1}^n s(i)-k\ge (r-1)k$. So by the induction hypothesis, $C_{s'}(n,k,\dots, k)$ with $k$ repeated $r-1$ times is non-empty. Let $(A_2,\dots, A_r)$ be a vertex of it, and hence $(\{1,\dots, k\}, A_2,\dots, A_r)$ will be a vertex of $C_s(n,k,\dots, k)$ with $k$ repeated $r$ times.
\end{proof}

\begin{lemma}
The maximal nerve of $K_s(n,\mathbf{k})$ is isomorphic to $C_s(n,\mathbf{k})$. 
\end{lemma}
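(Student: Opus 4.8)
The plan is to exhibit an explicit bijection between the vertices of the maximal nerve $N(K_s(n,\mathbf{k}))$ and the vertices of $C_s(n,\mathbf{k})$, and then verify that this bijection carries faces to faces in both directions. Recall that a vertex of $N(K_s(n,\mathbf{k}))$ is a \emph{maximal} face $G$ of $K_s(n,\mathbf{k})$, while a vertex of $C_s(n,\mathbf{k})$ is an $r$-tuple $(A_1,\dots,A_r)$ with $|A_i|\ge k_i$ and with each $x\in[n]$ lying in exactly $s(x)$ of the $A_j$. So the first step is to understand the structure of a maximal face of $K_s(n,\mathbf{k})$. Given a maximal face $G=\{(A_1^i,\dots,A_r^i)\}_{i\in I}$, I would form the componentwise unions $U_j := \bigcup_{i\in I} A_j^i$ for $j=1,\dots,r$ and show that the assignment $G \mapsto (U_1,\dots,U_r)$ is the desired vertex of $C_s(n,\mathbf{k})$. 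The containment $|U_j|\ge k_j$ is immediate since each $A_j^i$ already has size $k_j$; the delicate point is the exact-multiplicity condition, which should follow from maximality of $G$.

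The key structural claim I would establish is that a maximal face $G$ of $K_s(n,\mathbf{k})$ is determined by, and recoverable from, the tuple $(U_1,\dots,U_r)$ of componentwise unions, namely that $G$ consists of \emph{all} vertices $(A_1,\dots,A_r)$ of $K_s(n,\mathbf{k})$ with $A_j\subseteq U_j$ for every $j$. First I would check that this collection is genuinely a face: if every $A_j^{i}\subseteq U_j$, then for any choice of indices $i_1,\dots,i_r$ the tuple $(A_1^{i_1},\dots,A_r^{i_r})$ has each element $x$ appearing in at most as many components as $x$ appears among the sets $U_j$, and the $s$-disjointness must be inherited from that of the ambient $U$-data. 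This is where the exact-multiplicity side of $C_s$ enters: maximality forces each $x$ to appear in exactly $s(x)$ of the $U_j$, since if $x$ appeared in fewer, one could enlarge some $A_j^i$ or adjoin a new tuple, contradicting maximality. Conversely, given a vertex $(U_1,\dots,U_r)$ of $C_s(n,\mathbf{k})$, the face defined as all $s$-disjoint tuples componentwise contained in it is maximal, giving the inverse map.

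With the vertex bijection in hand, the final step is to match faces. A face of $N(K_s(n,\mathbf{k}))$ is a collection $\{G^1,\dots,G^m\}$ of maximal faces with $G^1\cap\dots\cap G^m\neq\emptyset$; under the correspondence $G^t \leftrightarrow (U_1^t,\dots,U_r^t)$, I would show that this nonempty-intersection condition is exactly equivalent to $|\bigcap_t U_j^t|\ge k_j$ for all $j$, which is precisely the face condition of $C_s(n,\mathbf{k})$. The forward direction is easy: a common vertex $(A_1,\dots,A_r)$ of all the $G^t$ satisfies $A_j\subseteq U_j^t$ for every $t$, so $A_j\subseteq \bigcap_t U_j^t$ and hence $|\bigcap_t U_j^t|\ge k_j$. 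For the reverse, if $|\bigcap_t U_j^t|\ge k_j$ for each $j$, I would select $k_j$-subsets $A_j\subseteq\bigcap_t U_j^t$ and argue they can be chosen to form an $s$-disjoint tuple lying in every $G^t$.

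I expect the main obstacle to be this last selection: producing a single common vertex inside $\bigcap_t G^t$ requires choosing the $A_j$ to be simultaneously the right size and $s$-disjoint, and the $s$-disjointness of the chosen tuple is a genuine constraint rather than an automatic consequence of the size bounds. I anticipate that the cleanest route is to observe that $\bigcap_t U_j^t$ inherits the property that each $x$ lies in at most $s(x)$ of these intersections (since it lies in at most $s(x)$ of the $U_j^t$ for each fixed $t$), so any componentwise choice of subsets is automatically $s$-disjoint; the existence of the required sizes $k_j$ is then exactly the hypothesis $|\bigcap_t U_j^t|\ge k_j$. Verifying this inheritance carefully, and confirming that the resulting tuple indeed lies in each maximal face $G^t$ under the "all componentwise-contained tuples" description, is the crux of the argument.
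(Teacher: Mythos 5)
Your proposal is correct and follows essentially the same route as the paper's proof: the bijection sends a maximal face to its tuple of componentwise unions (equivalently, a vertex $(A_1,\dots,A_r)$ of $C_s(n,\mathbf{k})$ to the face of all componentwise-contained size-$k_j$ tuples), with the exact-multiplicity condition extracted from maximality via the same add-$x$-and-swap-an-element argument, and with the face condition on mixed tuples ruling out multiplicity above $s(x)$. The one difference is that you work out the face correspondence in detail --- nonempty intersection of maximal faces if and only if $|\bigcap_t U_j^t|\ge k_j$ for all $j$, using the observation that each $x$ lies in at most $s(x)$ of the sets $\bigcap_t U_j^t$ --- a step the paper explicitly leaves to the reader, and your treatment of it is correct.
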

\begin{proof}
 We give a bijection between the vertices of $C_s(n,\mathbf{k})$ with the maximal faces of $K_s(n,\mathbf{k})$ that maps faces to faces. If $(A_1,\dots,A_r)$ is a vertex in $C_s(n,\mathbf{k})$, the collection of all $\{(X_1^i,\dots,X_r^i)\}_{i\in I}$ of all subsets $X_j^i\subseteq A_j$ of size $k_j$, gives a maximal face of $K_s(n,\mathbf{k})$. First, since $(A_1,\dots, A_r)$ is $s$-disjoint any $(X_1,\dots,X_r)$ where $X_i\subseteq A_i$ is also $s$-disjoint. So the collection is a face of $K_s(n,\mathbf{k})$. Second, if the collection were not maximal, one can add $(X_1,\dots,X_r)$ where at least one of $X_i$'s is not a subset of $A_i$. Assume $X_1$ is not a subset of $A_1$, hence there is $x\in X_1\backslash A_1$. Since the number of $2\le i\le r$ that $x\in A_i$ is exactly $s(x)$, one can find $(X_1',\dots,X_r')$ where $X_i'\subseteq A_i$ of size $k_i$ and the number of $2\le i\le r$ that $x\in X_i'$ is exactly $s(x)$, this implies that $(X_1,X_2',\dots,X_r')$ is not $s$-disjoint, which is a contradiction. Conversely if $\{(X_1^i,\dots, X_r^i)\}_{i\in I}$ is a maximal face of $K_s(n,\mathbf{k})$, if we let $A_j=\cup_{i\in I} X_j^i$ for $j=1,\dots, r$, then $(A_1,\dots, A_r)$ is a vertex of $C_s(n,\mathbf{k})$. Since, if $x\in\{1,\dots, n\}$ appears in $l>s(x)$ of $A_i$'s then one can find $i_1<\dots<i_l$ and $j_1<\dots<j_l$ that $x\in X_{j_1}^{i_1},\dots , x\in X_{j_l}^{i_l}$, this contradicts the fact that $\{(X_1^i,\dots, X_r^i)\}_{i\in I}$ is a face. And if if $x\in\{1,\dots, n\}$ appears in $l<s(x)$ of $A_i$'s, by adding $x$ to one of the $X_j^i$ that does not contain it and removing another element from it, we arrive at a set $Z_j^i$ where $(X_1^i,\dots, X_{j-1}^i, Z_j^i,X_{j+1}^i,\dots,X_r^i)$ can be added to the face $\{(X_1^i,\dots, X_r^i)\}_{i\in I}$ without violating the face condition. This contradicts the maximality condition. The fact that this bijection between vertices sends faces to faces is easy and is left to the reader.
\end{proof}

Note that if $\mathbf{k}=(k,\dots, k)$, where  $k$ is repeated $r$ times, then $K_s(n,\mathbf{k})$ is $C(\mbox{KG}_s^r(n,k))$.

To compute the connectivity $C(\mbox{KG}^r_s(n,k))$, we use the following well-known fact, as stated in \cite{AFL}.
\begin{lemma}\label{lem}
If a simplicial complex $X$ is a union of its sub-complexes $X_1,\dots, X_n$ and for a given integer $c$ and for any $1\le l\le n$ and $1\le  i_1<i_2<\dots<i_l\le n$, the intersection $X_{i_1}\cap\dots\cap X_{i_l}$ is $(c-l+1)$-connected then $X$ is $c$-connected.(Recall that $(-1)$-connected means non-empty and any space is $c$-connected for $c\le -2$.)
\end{lemma}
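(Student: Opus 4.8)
The plan is to prove this by induction on the number $n$ of subcomplexes, keeping the integer $c$ a free parameter so that the inductive hypothesis is available for \emph{every} value of $c$ and every smaller number of pieces simultaneously. The base case $n=1$ is immediate: the only hypothesis, namely the case $l=1$, asserts that $X_1$ is $(c-1+1)=c$-connected, and $X=X_1$. The degenerate conventions are handled along the way: for $c=-1$ the $l=1$ hypothesis forces each $X_i$, and hence $X$, to be nonempty, and for $c\le -2$ there is nothing to prove.

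For the inductive step I would write $X=A\cup B$ with $A=X_1\cup\dots\cup X_{n-1}$ and $B=X_n$, and then invoke the standard \emph{gluing lemma for connectivity}: if a simplicial complex is the union of two subcomplexes $A$ and $B$ that are both $c$-connected and whose intersection $A\cap B$ is $(c-1)$-connected, then the union is $c$-connected. Two of the three inputs are easy. First, $A$ is $c$-connected by the inductive hypothesis applied to the $n-1$ subcomplexes $X_1,\dots,X_{n-1}$, since each of their $l$-fold intersections occurs among the original intersections and is therefore $(c-l+1)$-connected. Second, $B=X_n$ is $c$-connected, which is exactly the $l=1$ hypothesis for $i_1=n$.

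The crucial point is the third input, that $A\cap B$ is $(c-1)$-connected, and this is where the bookkeeping does its work. I would observe that
$$A\cap B=(X_1\cup\dots\cup X_{n-1})\cap X_n=\bigcup_{i=1}^{n-1}(X_i\cap X_n),$$
a union of the $n-1$ subcomplexes $Y_i:=X_i\cap X_n$. To apply the inductive hypothesis to this union with the shifted parameter $c'=c-1$, I need each $l$-fold intersection $Y_{i_1}\cap\dots\cap Y_{i_l}=X_{i_1}\cap\dots\cap X_{i_l}\cap X_n$ to be $(c'-l+1)=(c-l)$-connected; but this is an $(l+1)$-fold intersection of the original subcomplexes, which by hypothesis is $(c-(l+1)+1)=(c-l)$-connected, precisely as required. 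Hence $A\cap B$ is $(c-1)$-connected by induction, and the gluing lemma yields that $X$ is $c$-connected.

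The main obstacle, and the only genuinely nontrivial ingredient, is the gluing lemma itself, which I would establish by the usual combination of the van Kampen theorem (to obtain simple connectivity of $X$ from simple connectivity of the pieces together with connectivity of the overlap), the (relative) Mayer--Vietoris sequence (to control $H_i(X)$ for $i\le c$), and the Hurewicz theorem (to pass back from homology to the higher homotopy groups once simple connectivity is known). Care is needed in the low-dimensional cases, especially $c=0$, where $(c-1)$-connectedness of $A\cap B$ reduces to nonemptiness of the overlap, which is exactly what guarantees that the union of two connected pieces remains connected.
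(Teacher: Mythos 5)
The paper itself gives no proof of this lemma: it is invoked as a ``well-known fact'' quoted from \cite{AFL}, so there is no internal argument to compare yours against. Your proof is correct, and it is essentially the standard way this fact is established. The induction on $n$ with $c$ kept as a free parameter is set up properly: the base case is the $l=1$ hypothesis, the subfamily $X_1\cup\dots\cup X_{n-1}$ inherits all its hypotheses from the original family, and the crucial bookkeeping step is right --- writing $(X_1\cup\dots\cup X_{n-1})\cap X_n=\bigcup_{i<n}(X_i\cap X_n)$ and observing that an $l$-fold intersection of the complexes $X_i\cap X_n$ ($i<n$) is an $(l+1)$-fold intersection of the originals, hence $(c-l)$-connected, which is exactly the hypothesis needed to apply the inductive statement with parameter $c-1$. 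The one external ingredient, the two-piece gluing lemma (both pieces $c$-connected, overlap $(c-1)$-connected, union $c$-connected), is genuinely standard for CW pairs/simplicial complexes and is proved exactly as you indicate via van Kampen, Mayer--Vietoris, and Hurewicz; your attention to the degenerate cases $c\le -1$ and $c=0$ (where connectivity of the union needs the overlap to be nonempty) closes the low-dimensional loopholes. In short, you have supplied a complete and correct proof of a statement the paper leaves as a citation.
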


The following theorem, together with Theorem \ref{thm1} will finish the proof of Corollary \ref{z} for the case when $r$ is a prime number. However, the statement of the following theorem does not require that $r$ to be a prime number.
\begin{theorem}
The complex $C_s(n,k,\dots, k)$ where $k$ is repeated $r$ times is $(s(1)+\dots+s(n)-rk-1)$-connected.
\end{theorem}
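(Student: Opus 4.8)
The plan is to deduce the theorem from a more flexible statement about the generalized complexes introduced above: for every nonnegative integer vector $\mathbf{k}=(k_1,\dots,k_r)$, the complex $C_s(n,\mathbf{k})$ is $\left(\sum_{i=1}^n s(i)-\sum_{j=1}^r k_j-1\right)$-connected. Taking $\mathbf{k}=(k,\dots,k)$ recovers the theorem. I would prove this by induction on $\sum_{j=1}^r k_j$. For the base case $\sum_j k_j=0$ every $k_j$ is $0$, so the face condition $|\bigcap_{i\in I}A_j^i|\ge 0$ holds automatically and $C_s(n,\mathbf{0})$ is a full simplex on its (nonempty) vertex set, hence contractible; in particular it is $c$-connected for the required value of $c$.

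For the inductive step I would fix a coordinate $j_0$ with $k_{j_0}\ge 1$ and cover $C_s(n,\mathbf{k})$ by the subcomplexes $W_x=\{F : x\in\bigcap_{i\in I}A_{j_0}^i\}$, one for each $x$ with $s(x)\ge 1$. Each $W_x$ is a subcomplex because passing to a subface only enlarges the intersection $\bigcap_i A_{j_0}^i$, so membership is preserved downward. These subcomplexes cover: for any face $F$ one has $|\bigcap_i A_{j_0}^i|\ge k_{j_0}\ge 1$, so this common intersection contains some element $x$ and $F\in W_x$. Crucially, this choice of cover sidesteps the obstruction that plagues covers keyed on the location of a single element — namely faces whose vertices place a given element in different coordinates — since it is phrased purely in terms of the (always nonempty) common intersection of a fixed coordinate.

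The heart of the argument is the identification of the intersections. For $Q=\{x_1,\dots,x_l\}$ the intersection $W_{x_1}\cap\dots\cap W_{x_l}$ consists of the faces all of whose vertices contain $Q$ in coordinate $j_0$; deleting $Q$ from the $j_0$-th coordinate of every vertex gives an isomorphism onto $C_{s'}(n,\mathbf{k}-l\,e_{j_0})$, where $s'(x)=s(x)-1$ for $x\in Q$ and $s'(x)=s(x)$ otherwise. Here $\sum_i s'(i)=\sum_i s(i)-l$ and $\sum_j (\mathbf{k}-l\,e_{j_0})_j=\sum_j k_j-l$, so the inductive hypothesis makes this intersection $c$-connected with $c=\sum_i s(i)-\sum_j k_j-1$; since $c\ge c-l+1$ it is in particular $(c-l+1)$-connected, which is exactly the hypothesis of Lemma \ref{lem}. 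That lemma then yields that $C_s(n,\mathbf{k})$ is $c$-connected, completing the induction.

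The step I expect to be the main obstacle is controlling non-emptiness of the reduced complexes. When $c-l+1\ge -1$, Lemma \ref{lem} demands that the $l$-fold intersection actually be nonempty, i.e. that $C_{s'}(n,\mathbf{k}-l\,e_{j_0})$ be nonempty; but lowering one coordinate of $\mathbf{k}$ destroys the symmetry of $\mathbf{k}$, and a careless reduction can produce an infeasible (empty) complex whose vanishing would violate the asserted connectivity. The resolution is to run the induction together with a non-emptiness guarantee generalizing Lemma \ref{empty}, tracking at each stage that every coordinate can still be filled from the surviving support of $s'$; for the uniform vector $\mathbf{k}=(k,\dots,k)$ relevant to the theorem, Lemma \ref{empty} supplies exactly the needed base feasibility, and the bookkeeping must ensure it is preserved through the successive single-coordinate reductions.
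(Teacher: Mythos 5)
The crux of your inductive step is false. The deletion map on $W_{x_1}\cap\dots\cap W_{x_l}$ is injective but not surjective onto $C_{s'}(n,\mathbf{k}-l\,e_{j_0})$: every vertex in the image has its $j_0$-th coordinate disjoint from $Q$ (each $x\in Q$ occurs in exactly $s(x)-1$ coordinates, \emph{none of them} $j_0$), whereas a vertex of $C_{s'}(n,\mathbf{k}-l\,e_{j_0})$ is free to contain elements of $Q$ in coordinate $j_0$. Concretely, take $r=2$, $n=3$, $s=(2,1,1)$, $\mathbf{k}=(1,1)$, $j_0=1$, $Q=\{1\}$: then $W_1$ has exactly the four vertices $(\{1\},\{1,2,3\})$, $(\{1,2\},\{1,3\})$, $(\{1,3\},\{1,2\})$, $(\{1,2,3\},\{1\})$, and its image under deletion misses the vertex $(\{1\},\{2,3\})$ of $C_{s'}(3,(0,1))$. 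So the intersections in your cover are not complexes of the form $C_{s'}(n,\mathbf{k}')$ at all --- they carry an extra positional constraint ("elements of $Q$ avoid coordinate $j_0$") --- and your induction on $\sum_j k_j$ does not close over the class you set up. Note also that when $l<k_{j_0}$ you cannot instead retract $W_Q$ onto the vertices with $A_{j_0}=Q$, since no such vertices exist ($|A_{j_0}|\ge k_{j_0}>l$); this small-$|Q|$ regime is precisely where a different argument is forced.

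Independently, the statement you induct on is false for non-uniform $\mathbf{k}$: with $n=1$, $r=2$, $s(1)=2$, $\mathbf{k}=(0,2)$ one has $\sum_i s(i)-\sum_j k_j-1=-1$, so your claim asserts the complex is nonempty, yet it is empty because $|A_2|\ge 2>n$. This is not the bookkeeping issue you flag at the end: Lemma \ref{empty} genuinely uses the uniformity of $\mathbf{k}$ (the paper explicitly remarks that the nonemptiness criterion for general $\mathbf{k}$ is complicated), and your reductions leave the uniform class at the very first step. For comparison, the paper's proof uses the \emph{same} cover as yours (its $C^i$ is exactly your $W_x$ with $j_0=1$), but analyzes the intersections $C^I$ in two regimes. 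For $|I|\ge k$ it deformation-retracts $C^I$ (Lemma \ref{lem3}) onto the subcomplex of vertices with $A_1=I$, which deletes the whole first coordinate and is honestly isomorphic to $C_{s'}(n,k,\dots,k)$ with $k$ repeated $r-1$ times; the induction is therefore on $r$, staying inside the uniform class where Lemma \ref{empty} applies. For $|I|<k$ it proves $c$-connectivity of $C^I$ by a backward induction on $|I|$, re-covering $C^I$ by the complexes $C^{I\cup\{i\}}$ and invoking Lemma \ref{lem} again. Repairing your argument would essentially force you to reproduce this two-regime structure, so as it stands the proposal has a genuine gap rather than a fixable omission.
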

\begin{proof}
Let $M=\{1\le i\le n| s(i)=r\}$, $\mbox{Supp}(s)=\{1\le i\le n| s(i)>0\}$ and $c=s(1)+\dots+s(n)-kr-1$. We need to show that $C_s(n,k,\dots,k)$, where  $k$ is repeated $r$ times is $c$-connected. If $s(1)+\dots+s(n)<kr$ then it is obvious since by convention any space is $c$-connected for $c\le -2$. Also, if $k\le |M|$ then the complex is contractible  since any subset of vertices will be a face. We, therefore, assume $k>|M|$. The complex $C_s(n,k,\dots, k)$ is isomorphic to $C_{s'}(n-|M|,k-|M|,\dots,k-|M|)$ where we remove the elements of $M$ and the corresponding $s_i$'s. Since both complexes have the same corresponding number $c$, we can assume that $M$ is empty. 

 Let $C^i$ for $i\in \mbox{Supp}(s)$ be the subcomplex of $C_s(n,k,\dots, k)$ of those vertices $(A_1,\dots, A_r)$ that $i\in A_1$. According to lemma \ref{lem} it is enough to show that  for any $I\subseteq \mbox{Supp}(s)$,  $C^I=\cap_{i\in I}C^i$ is $(c-|I|+1)$-connected.

 First, we show that when $|I| \ge k$ then $C^I$ is $(c-|I|+k)$-connected. Since $c-|I|+k\ge -1$ is the only non-trivial case, we may assume $|I|\le c+k+1$.  
Under this assumption, it is easy to show that $C^I$ is non-empty. In fact if we let $s'(i)=s(i)-1$ for $i\in I$ and $s'(i)=s(i)$ otherwise, then
$$\sum_{i=1}^n s'(i)-(r-1)k=c+1-|I|+k\ge 0$$
hence by lemma \ref{empty}, there is $(A_2,\dots, A_r)$ in $C_{s'}(n,k,\dots, k)$ with $k$ repeated $r-1$ times. Now $(I, A_2,\dots, A_r)$ is an element of $C^I$.

 By the following Lemma \ref{lem3}, $C^I$ is homotopy equivalent to $C_{s'}(n,k,\dots, k)$ with $k$ repeated $r-1$ times, hence by induction on $r$, $C^I$ is $\sum s'(i)-(r-1)k-1= (c-|I|+k)$-connected. Since $k>0$, it is at least $(c-|I|+1)$-connected.

  For $l=|I|<k$, we show that $C^I$ is in fact $c$-connected. We use a backward induction on $l$. If $l=k-1$, then consider $C^I$ as the union of $C^{I\cup\{i\}}$ for $i\in \mbox{Supp}(s)\backslash I$. Then any intersection of $l'$ of these sub-complexes is  $(c-(l+l')+k)$-connected, so it is at least  $(c-l'+1)$-connected, and hence by lemma \ref{lem}, $C^I$ is $c$-connected. When $|I|<k-1$, then similarly $C^I$ is the union of $C^{I\cup\{i\}}$  and the intersection of any $l'$ of them by the induction hypothesis, if $l+l'<k$ is $c$-connected and if $l+l'\ge k$ is $(c-(l+l')+k)$-connected, and in both cases it is at least $(c-l'+1)$-connected, so by lemma \ref{lem}, $C^I$ is $c$-connected.

\end{proof}

\begin{lemma}\label{lem3}
If $I$ is a subset of $\mbox{Supp}(s)$ of size at least $k$, then $C^I$ is homotopy equivalent to $C_{s'}(n,k,\dots, k)$ where $k$ is repeated $r-1$ times. Here,  $s'(i)=s(i)-1$ for $i\in I$ and $s'(i)=s(i)$ otherwise.
\end{lemma}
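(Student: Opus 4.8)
The plan is to realize $C^I$ as collapsing onto the subcomplex $C^I_0$ consisting of those faces all of whose vertices $(A_1,\dots,A_r)$ satisfy $A_1=I$, and to identify $C^I_0$ with the target. First I would check that $C^I_0\cong C_{s'}(n,k,\dots,k)$ (with $k$ repeated $r-1$ times): the assignment $(I,A_2,\dots,A_r)\mapsto(A_2,\dots,A_r)$ forgets a constant first coordinate, and a direct count of multiplicities shows that an element $x$ occurs exactly $s(x)-[x\in I]=s'(x)$ times among $A_2,\dots,A_r$, while the face (intersection) conditions in coordinates $2,\dots,r$ are literally the same on both sides. This step is routine, so the substance lies in the homotopy equivalence $C^I\simeq C^I_0$.

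I would prove that equivalence by repeatedly deleting dominated vertices, using the standard fact that if the link of a vertex $v$ is a cone (in particular contractible) then the complex deformation retracts onto the deletion of $v$. The vertices to be removed are exactly those $v=(A_1,\dots,A_r)$ with $A_1\supsetneq I$, and I would remove them in order of decreasing $|A_1|$. Having first discarded the elements $i$ with $s(i)=r$ (which lie in every coordinate of every vertex, cf.\ the reduction in the theorem), for such a $v$ there is always room: some $A_j$ with $j\ge 2$ omits a chosen extra element $y\in A_1\setminus I$. Let $v^\flat$ be obtained from $v$ by deleting $y$ from $A_1$ and inserting it into $A_j$. Then $v^\flat$ is again a vertex of $C^I$ (the multiplicities are preserved, $I\subseteq A_1\setminus\{y\}$, and every block still has size $\ge k$ since $|A_1|\ge|I|+1\ge k+1$), and it has a strictly smaller first block, hence is still present when $v$ is deleted. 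The claim is that $v$ is dominated by $v^\flat$, i.e.\ $\sigma\cup\{v^\flat\}$ is a face whenever $\sigma\ni v$ is a face; this makes the link of $v$ a cone with apex $v^\flat$. Because this is tested by the same intersection inequalities inside any induced subcomplex still containing $v^\flat$, the deletions may be carried out in turn, and once all such $v$ are gone the surviving vertices satisfy $A_1=I$, leaving $C^I_0$ and yielding $C^I\simeq C^I_0\cong C_{s'}$.

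Verifying the domination is where the hypothesis $|I|\ge k$ is essential, and this is the step I expect to be the real obstacle. Writing $m_j=\bigcap_{w\in\sigma}A_j^w$ for the coordinatewise intersections of a face $\sigma$ containing $v$, relocating $y$ changes only the first and the $j$-th coordinate. In coordinate $j$ nothing shrinks, since $A_j^{v}=A_j$ already omits $y$, so $m_j\cap(A_j\cup\{y\})=m_j$ is unchanged and still of size $\ge k$. The delicate coordinate is the first: after adjoining $v^\flat$ the new intersection becomes $m_1\setminus\{y\}$, and I must guarantee $|m_1\setminus\{y\}|\ge k$. Here is the crux: $m_1\supseteq I$, so if $y\in m_1$ then $I\cup\{y\}\subseteq m_1$ forces $|m_1|\ge|I|+1\ge k+1$, whence $|m_1\setminus\{y\}|\ge k$; and if $y\notin m_1$ the size does not drop at all. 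Thus $|I|\ge k$ is exactly what prevents the first-coordinate intersection from falling below $k$, which is precisely why the lemma requires $|I|\ge k$ and what makes the entire dismantling go through.
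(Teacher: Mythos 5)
Your proposal is correct, and it reaches the paper's conclusion by a genuinely different mechanism. The paper proves the lemma in one shot: it defines a retraction $r\colon C^I\to C^I_0$ replacing $A_1$ by $I$ and redistributing all of $A_1\setminus I$ among $A_2,\dots,A_r$ simultaneously, observes that for any face $\sigma$ the set $\sigma\cup r(\sigma)$ is again a face (the first-coordinate intersection becomes $I$, of size $\ge k$, and the other coordinates only grow), and concludes via the straight-line homotopy $tx+(1-t)r(x)$ that $C^I_0$ is a deformation retract. You instead dismantle $C^I$ vertex by vertex, deleting each $v$ with $A_1\supsetneq I$ in decreasing order of $|A_1|$ after showing it is dominated by $v^\flat$, which moves a single element of $A_1\setminus I$ into a block omitting it. The crux is identical in both arguments --- every first-coordinate intersection of a face of $C^I$ contains $I$, so $|I|\ge k$ is exactly what keeps it from dropping below $k$ --- but the homotopy-theoretic input differs: the paper leans on the (tersely justified) claim that its strongly simplicial retraction yields a deformation retraction, whereas you only need the standard fact that deleting a vertex whose link is a cone preserves homotopy type, paying for this with the bookkeeping of the deletion order and the persistence of domination in induced subcomplexes, both of which you handle correctly. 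Your route also gives slightly stronger information, namely a strong collapse of $C^I$ onto $C^I_0$ by elementary moves. One shared caveat: both proofs implicitly require that no $x\in A_1\setminus I$ has $s(x)=r$ (otherwise $x$ cannot be removed from the first block; indeed, if some $x\notin I$ had $s(x)=r$ then $C_{s'}$ would be empty while $C^I$ need not be, so the lemma as literally stated needs this hypothesis). You flag this by invoking the reduction to $M=\emptyset$ carried out in the theorem's proof, which is precisely the context in which the lemma is applied, so this is a shared implicit assumption rather than a gap in your argument.
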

\begin{proof}
We define a retraction $r$ from $C^I$ to the sub-complex of vertices $(A_1,\dots, A_r)$ with $A_1=I$,which is isomorphic to $C_{s'}(n,k,\dots, k)$ where $k$ is repeated $r-1$ times. For this purpose, for a vertex $v=(A_1,\dots,A_r)$ of $C^I$, let $r(v)=(I , A_2',\dots, A_r')$ be a vertex of $C^I$ where $A_2',\dots, A_r'$ are obtained by distributing the elements of $A_1\backslash I$ among them only once, so that each $i$ appear in exactly $s(i)$ subset, for example, let $B_i=(A_1\backslash I)\cap A_1\cap\dots\cap A_{i-1}$ for $i=2,\dots, n$ and let $A_i'=A_i\cup B_i$. Notice that if $\{v_1,\dots, v_m\}$ is a face of $C^I$ then $\{v_1,\dots,v_m, r(v_1),\dots, r(v_m)\}$ is also a face, so $r$ is a simplicial map in a stronger sense.  Also the simple homotopy given by $tx+(1-t)r(x)$ for $0\le t\le 1$ on the
geometric realization of the complex $C^I$ is well-defined and shows that $r$ is a deformation retract. 
\end{proof}

\begin{remark}
It seems to be an interesting problem to understand the homotopy type of the complex $C_s(n,k_1,\dots, k_r)$ in general. We believe that it must be wedge of several spheres.
\end{remark}

{\textbf{Acknowledgement.}} The authors like to thank Hamidreza Daneshpajouh for his careful reading and constructive comments that improved the exposition of the  article greatly. They would also like to thank the reviewers whose suggestions made the text much better.

\end{document}